 \newtheorem*{unthm}{Theorem}
 \newtheorem{thm}{Theorem}[section]
 \newtheorem{prop}[thm]{Proposition}
 \theoremstyle{definition}
 \theoremstyle{remark}
 \newtheorem{rem}[thm]{Remark}
 \newtheorem*{ex}{Example}
 \numberwithin{equation}{section}
\begin{document}

%
%
%
%
%
%
%
%
%

\title[Cosmic convergence of nonexpansive maps]
 {Comments on the cosmic convergence\\ of nonexpansive maps}

\author[A.W. Guti\'{e}rrez]{Armando W. Guti\'{e}rrez}

\address{INRIA Saclay and CMAP Ecole Polytechnique\br 
CNRS, 91128, Palaiseau, France}
\email{armando.w.gutierrez@inria.fr}
\email{armando.gutierrez@polytechnique.edu}

\author[A. Karlsson]{Anders Karlsson}

\address[1]{Section de mathématiques, Université de Genève\br
2-4 Rue du Lièvre, Case Postale 64, 1211 Genève 4, Suisse}
\email{anders.karlsson@unige.ch}

\address[2]{Matematiska institutionen, Uppsala universitet\br
Box 256, 751 05 Uppsala, Sweden}
\email{anders.karlsson@math.uu.se}


\subjclass{Primary 47H09; Secondary 47A15}

\keywords{Metric fixed point theory, nonexpansive maps, invariant subspace problem, metric functionals}

\date{August 10, 2021}

\begin{abstract}
This note discusses some aspects of the asymptotic behaviour of nonexpansive
maps. Using metric functionals, we make a connection to the invariant
subspace problem and prove a new result for nonexpansive maps of $\ell^{1}$.
We also point out some inaccurate assertions appearing in the literature on 
this topic.
\end{abstract}

\maketitle
\section{Introduction}

This brief note was inspired by recent papers by Bauschke, Douglas,
and Moursi \cite{BDM16}, and Ryu \cite{Ry18}. Let $(X,d)$ be a metric
space and let $T$ be a nonexpansive map of $X$ into itself, that is
\[
d(Tx,Ty)\leq d(x,y)
\]
for all $x,y\in X$. The question concerns the asymptotic behaviour
of the iterates $T^{n}x$ as $n\rightarrow\infty$. The first important 
case of this is well-known: the contraction mapping principle.
In this paper we are mostly interested in the complementary case; 
when no fixed point exists. The case when $X$ is a Banach space has 
been especially studied, see \cite{P71,R73,BBR78,KoN81,R81,R82,PlR83,MR03,BDM16,Ry18} 
and references therein. It should be pointed out that a significant 
special case was considered in the 1930s: the mean ergodic theorem 
of von Neumann and Carleman, especially in the generality of 
F. Riesz \cite[Theorem~1]{R38}. Here $X$ is a Hilbert space and 
\[
Tx=Ux+v
\]
where $U$ is a linear operator of norm at most one and $v\in X$.
Then the iterates $T^{n}0$ converge in the following sense
\[
\lim_{n\to\infty}\frac{1}{n}T^{n}0 = \lim_{n\to\infty}\frac{1}{n}\sum_{k=0}^{n-1}U^{k}v = \pi_{U}(v)
\]
where $\pi_{U}$ is the projection onto the subspace of $U$-invariant
vectors.

It is stated in \cite[Fact~1.1]{BDM16} and also in \cite{Ry18} that
for fixed-point free nonexpansive maps of Hilbert spaces the orbit
must diverge in the sense that $\left\Vert T^{n}x\right\Vert \rightarrow\infty$
as $n\rightarrow\infty$. This may however fail in infinite-dimensional 
Hilbert spaces as an isometry discovered by Edelstein in the 1960s 
demonstrates, recalled in \cite[p.~1453]{K01} and the nice exposition \cite[p.~6]{V07}. 
That is, in spaces which are not locally compact it may happen that 
the orbit $(T^{n}x)_{n\geq 0}$ neither stays bounded nor leaves every bounded set. 
On the other hand, this phenomenon cannot happen in spaces where
closed bounded sets are compact as it was shown by Calka in the 1980s 
(a simple proof is given in \cite[Lemma 2.6]{G18}).

The quantity 
\begin{equation}
	\tau=\lim_{n\rightarrow\infty}\frac{1}{n}\left\Vert T^{n}x\right\Vert \label{eq:tau}
\end{equation}
always exists by a well-known subadditive argument and it is independent
of $x$. Convergence in direction, \emph{cosmic convergence }in the
terminology of \cite{BDM16,Ry18}, in the case $\tau>0$ for general
Banach spaces was treated in \cite{R73,KoN81} among other papers.
A very general result in this direction is found in \cite{GK20} which
considers random products of nonexpansive maps of any metric space.
In the case $\tau=0$ one may wonder, as the authors in \cite{BDM16}
do, if cosmic convergence still takes place, that is, whether
\[
\frac{T^{n}x}{\left\Vert T^{n}x\right\Vert }
\]
converges as $n\rightarrow\infty$. Strong and weak limits of this
expression are called strong and weak cosmic limits respectively.
This question is also raised in \cite[Problem~4.6]{K02}. The paper
\cite{BDM16} proves several interesting cases when this is true,
and suggests a conjecture that it might always be true for finite
dimensional Hilbert spaces. A counterexample was given in \cite[Section~3]{Ry18}.
In 2005, Enrico Le Donne showed the second author another such 
an example (unpublished). In other normed spaces, a counterexample 
was considered by Kohlberg and Neyman in \cite[p.~272]{KoN81}, 
also discussed in \cite[p.~1936]{G18}.

It still makes sense to wonder about what the limit set can be when
it is not just one point. The theorem \cite[Theorem~11]{K05} shows
that it must be contained in certain kinds of sectors. This can be
compared with Corollary 5.3 to Theorem 5.1 in \cite{Ry18}. These
are some of the most general results presently available on this type
of convergence known to us. When $X$ is hyperbolic there are many
results starting from Wolff-Denjoy showing that cosmic convergence
holds, for example \cite{K01}, see also the discussion in \cite{BDM16}.
Let us point out the similar question and conjecture in the case $X$
is a convex set equipped with Hilbert's metric, called the Karlsson-Nussbaum
conjecture, see \cite{LLNW18} for one of the most recent significant
contributions.

In Section~\ref{sec:3} we will prove the following connection of cosmic convergence to the problem of
the existence of non-trivial closed invariant subspaces:
\begin{unthm}
	\label{thm:invariant}
	Suppose that $T$ is a nonexpansive map of
	a real Hilbert space into itself of the form $Tx=Ux+v$ for some vector $v$ 
	and linear operator $U$ of norm at most one. If $0\notin\overline{Im(I-T)},$
	then there is a non-zero continuous linear functional $f$ such that
	\[
	f(Tx)\geq f(x)
	\]
	for all $x$. This inequality implies that $\ker(f)$ is a non-trivial
	closed invariant subspace for $U$.  
\end{unthm}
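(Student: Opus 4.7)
The plan is to apply the Hahn-Banach separation theorem to the closed convex set $C := \overline{\mathrm{Im}(I-T)}$ and the point $0 \notin C$. The key structural observation is that, since $T$ is affine, $\mathrm{Im}(I-T) = \mathrm{Im}(I-U) - v$ is an affine subspace of $H$, so $C = \overline{\mathrm{Im}(I-U)} - v$ is actually a translate of a closed linear subspace, not merely a closed convex set.

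Strict separation between the compact set $\{0\}$ and the closed convex set $C$ yields a nonzero $f \in H^{*}$ and some $\alpha > 0$ with $f(y) \le -\alpha$ for all $y \in C$ (after possibly replacing $f$ by $-f$). Because $C$ contains an entire translate of the linear subspace $\overline{\mathrm{Im}(I-U)}$, any $z$ in that subspace with $f(z) \ne 0$ would make $\{f(tz) : t \in \mathbb{R}\}$ unbounded below, contradicting the separation bound. Hence $f$ vanishes identically on $\overline{\mathrm{Im}(I-U)}$, which is precisely the statement $f(Ux) = f(x)$ for every $x$. This gives $f(Tx) = f(x) + f(v)$, and evaluating the separation inequality at $(I-T)(0) = -v \in C$ yields $f(v) \ge \alpha > 0$, so in particular $f(Tx) \ge f(x)$ for every $x$.

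For the last clause I would observe that the inequality $f(Tx) \ge f(x)$ \emph{alone} already implies $f \circ U = f$: substituting $\lambda x$ for $x$ and using the linearity of $U$ and $f$ turns the inequality into $\lambda\bigl(f(Ux) - f(x)\bigr) \ge -f(v)$ for every $\lambda \in \mathbb{R}$, which forces $f(Ux) = f(x)$ by letting $\lambda \to \pm\infty$. Consequently $U(\ker f) \subset \ker f$, and $\ker f$, being the kernel of a nonzero continuous linear functional, is a closed proper non-zero subspace invariant under $U$. The only delicate point in the whole argument is to notice that the \emph{affine} (as opposed to merely convex) structure of $\mathrm{Im}(I-T)$ upgrades the one-sided separation bound to the equality $f \circ U = f$; everything else is routine Hahn-Banach and linearity bookkeeping.
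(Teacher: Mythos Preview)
Your argument is correct, but it follows a genuinely different route from the paper's. The paper does not use Hahn--Banach separation at all: it first invokes Pazy's result that $0\notin\overline{Im(I-T)}$ forces the escape rate $\tau>0$, then applies the Gaubert--Vigeral theorem to obtain a metric functional $h$ with $h(Tx)\le h(x)-\tau$ for every $x$, and finally uses the explicit classification of metric functionals on a real Hilbert space (Proposition~\ref{prop:hilbert}) to conclude that, since $h(T^n x)\to-\infty$, the functional $h$ must be of the linear type $h(x)=-(x,q)$ with $\|q\|=1$, whence $f=-h$ works. Your approach sidesteps all of this machinery by exploiting directly the affine structure of $T$: because $Im(I-T)=Im(I-U)-v$ is a translate of a linear subspace, strict separation of $0$ from its closure automatically forces the separating functional to annihilate $\overline{Im(I-U)}$, yielding $f\circ U=f$ and $f(v)>0$ in one stroke. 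This is more elementary and self-contained for the statement at hand; the paper's route, while heavier, places the result inside the metric-functional framework that drives the rest of the article and that is meaningful even for nonlinear nonexpansive $T$. The final clause---deducing $U(\ker f)\subset\ker f$ from the inequality $f(Tx)\ge f(x)$ via the scaling $x\mapsto\lambda x$---is essentially the same in both proofs.
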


In Section~\ref{sec:4} we will prove:
\begin{unthm}
	Let $T$ be a nonexpansive map of $\ell^{1}(\mathbb{Z})$ into itself. 
	Then there is a non-trivial metric functional $h$ so that 
	\[
	h(Tx)\leq h(x)
	\]
	for every $x\in\ell^{1}(\mathbb{Z})$, and the orbit $(T^{n}0)_{n\geq 0}$
	is contained in a half-space. More precisely, there is a non-trivial
	continuous linear functional $f$ so that 
	\[
	f(T^{n}0)\geq 0
	\]
	for all $n\geq 0$. 
\end{unthm}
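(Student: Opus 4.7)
The strategy is to construct both objects in parallel through a compactness argument that exploits the duality $\ell^{1}(\mathbb{Z})^{*}=\ell^{\infty}(\mathbb{Z})$ together with the explicit coordinate-by-coordinate form of metric functionals on $\ell^{1}(\mathbb{Z})$. Write $c_{n}:=T^{n}0$ and $h_{n}(x):=\|x-c_{n}\|_{1}-\|c_{n}\|_{1}$. For each $n$ choose a subgradient $\sigma_{n}\in\{-1,+1\}^{\mathbb{Z}}\subset B_{\ell^{\infty}}$ of $\|\cdot\|_{1}$ at $c_{n}$: set $\sigma_{n}^{(i)}=\mathrm{sign}\,(c_{n})_{i}$ on the support of $c_{n}$ and $\sigma_{n}^{(i)}=+1$ elsewhere. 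Then $\langle\sigma_{n},c_{n}\rangle=\|c_{n}\|_{1}$ and $|\langle\sigma_{n},y\rangle|\leq\|y\|_{1}$ for every $y\in\ell^{1}(\mathbb{Z})$.

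Nonexpansiveness of $T^{n-m}$ applied to $(c_{m},0)$ gives $\|c_{n}-c_{n-m}\|_{1}\leq\|c_{m}\|_{1}$, hence the key estimate
\[
\langle\sigma_{n},c_{m}\rangle=\|c_{n}\|_{1}-\langle\sigma_{n},c_{n}-c_{m}\rangle\geq\|c_{n}\|_{1}-\|c_{n-m}\|_{1}.
\]
Select $(n_{k})$ so that the right-hand side is non-negative for every fixed $m$: use the record times of $\|c_{n}\|_{1}$ in the unbounded case, or a subsequence realising $\limsup_{n}\|c_{n}\|_{1}$ in the bounded case (with an $o(1)$ slack). By Tychonoff compactness of $\{-1,+1\}^{\mathbb{Z}}$ together with Banach--Alaoglu, extract a further subsequence along which $\sigma_{n_{k}}^{(i)}\to\sigma^{(i)}\in\{-1,+1\}$ for every $i$ and $\sigma_{n_{k}}\to\sigma$ weak-$*$ in $B_{\ell^{\infty}}$. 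The functional $f(x):=\langle\sigma,x\rangle$ is non-trivial (every coordinate of $\sigma$ is $\pm 1$) and satisfies $f(c_{m})\geq 0$ for all $m\geq 0$; this yields the second assertion.

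By the same diagonal extraction one can arrange $(c_{n_{k}})_{i}\to\alpha_{i}\in\mathbb{R}\cup\{\pm\infty\}$ for every $i\in\mathbb{Z}$. Dominated convergence applied coordinate-wise (each summand $|x_{i}-(c_{n_{k}})_{i}|-|(c_{n_{k}})_{i}|$ is bounded by $|x_{i}|$, with $\sum_{i}|x_{i}|<\infty$) gives the pointwise limit
\[
h(x):=\lim_{k}h_{n_{k}}(x)=\sum_{i\in A}\bigl(|x_{i}-\alpha_{i}|-|\alpha_{i}|\bigr)-\sum_{i\in B}\epsilon^{(i)}x_{i},
\]
where $A=\{i:\alpha_{i}\in\mathbb{R}\}$, $B=\mathbb{Z}\setminus A$ and $\epsilon^{(i)}=\mathrm{sign}(\alpha_{i})\in\{\pm 1\}$ on $B$. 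This is a genuine non-trivial metric functional on $\ell^{1}(\mathbb{Z})$ in the standard characterization.

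The remaining step is the invariance $h\circ T\leq h$. From $\|Tx-c_{n}\|_{1}=\|Tx-Tc_{n-1}\|_{1}\leq\|x-c_{n-1}\|_{1}$ one obtains
\[
h_{n}(Tx)-h_{n-1}(x)\leq\|c_{n-1}\|_{1}-\|c_{n}\|_{1}\leq 0
\]
along the chosen subsequence. The hard part is passing to the limit: the metric functionals $h_{n_{k}}$ and $h_{n_{k}-1}$ need not converge to the same pointwise limit, because $\|c_{n_{k}}-c_{n_{k}-1}\|_{1}\leq\|T0\|_{1}$ is only bounded, not vanishing, so a naive passage yields only $h(Tx)\leq h'(x)$ for an auxiliary metric functional $h'$. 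This is the main obstacle and is the place where the explicit coordinate structure of $\ell^{1}(\mathbb{Z})$ must be used decisively: a further refinement forcing $\delta_{i}:=\lim_{k}(c_{n_{k}}-c_{n_{k}-1})_{i}$ to exist and satisfy $\sum_{i}|\delta_{i}|\leq\|T0\|_{1}$ (Fatou) gives $\beta_{i}=\alpha_{i}-\delta_{i}$, so the set $B$ and the signs $\epsilon$ are common to $h$ and $h'$, their $B$-contributions cancel in $h'-h$, and what remains is a sum over $i\in A$ that can be controlled coordinate-by-coordinate using the triangle inequality together with the record-based inequality $\sum_{i\in A}(|\alpha_{i}|-|\beta_{i}|)+\sum_{i\in B}\delta_{i}\geq 0$. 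I expect this final comparison to be the main technical work of the proof; an alternative route would invoke a Schauder-type fixed-point theorem on the pointwise-compact convex hull of metric functionals and return to a genuine metric functional via extreme-point considerations specific to $\ell^{1}(\mathbb{Z})$.
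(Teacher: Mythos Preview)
Your argument for the linear functional $f$ is correct and self-contained: the subgradient/record-time construction with a weak-$*$ limit in $\{-1,+1\}^{\mathbb{Z}}\subset B_{\ell^\infty}$ genuinely yields a non-zero $f$ with $f(T^n 0)\ge 0$. This is a different route from the paper, which instead first obtains the metric functional $h$ and then derives $f$ from it: since every metric functional on $\ell^1$ (except $h(x)=\|x\|$, which forces $T0=0$) takes strictly negative values somewhere, the Hahn--Banach-type lemma giving a linear $g\le h$ produces a non-zero $g$, and $f:=-g$ together with $h(T^n 0)\le h(0)=0$ finishes. Your direct construction avoids this detour but proves a bit less (it says nothing about $Tx$ for general $x$).

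The part that is genuinely incomplete is the inequality $h(Tx)\le h(x)$. You correctly identify the obstacle: from $h_{n_k}(Tx)\le h_{n_k-1}(x)$ you only get $h(Tx)\le h'(x)$ for an auxiliary limit $h'$, and there is no reason for $h'\le h$ pointwise. The coordinate comparison you sketch does not close this: with $\beta_i=\alpha_i-\delta_i$ on $A$, the difference $h'(x)-h(x)=\sum_{i\in A}\bigl[(|x_i-\beta_i|-|\beta_i|)-(|x_i-\alpha_i|-|\alpha_i|)\bigr]$ can have either sign (take $x$ supported on one coordinate with $x_i$ large), so no record-type inequality on the $|\alpha_i|-|\beta_i|$ will force $h'\le h$ globally. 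Your proposed Schauder alternative is in fact the right idea, but it is exactly the content of the Gaubert--Vigeral theorem, which the paper simply cites: for any nonexpansive self-map of a metrically star-shaped space there exists a metric functional $h$ with $h(Tx)\le h(x)-\tau$ for all $x$. The paper's proof is then immediate: invoke that theorem, and observe from the explicit list of metric functionals on $\ell^1(\mathbb{Z})$ that none is identically zero, so $h$ is automatically non-trivial. In short, you are trying to reprove Gaubert--Vigeral via orbit limits, which does not work without the fixed-point argument; the paper treats that inequality as a black box and reserves the $\ell^1$-specific input solely for non-triviality.
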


Metric functionals, which constitute a main tool in this note will
be recalled below. We provide some further statements and examples,
such as Proposition \ref{prop:firmly} about firmly nonexpansive maps.
We end by giving a simple example of a nonexpansive map whose unbounded
orbits converge to a metric functional but without having any
weak cosmic limit point. This shows once more that metric functionals
are useful when linear notions fail to describe a phenomenon. 

\section{Metric functionals}

Let $(X,d)$ be any metric space, for example a Banach space. Fix
$x_{0}\in X$, in the case of a Banach space we take $x_{0}=0$. 
We consider the following map 
\[
\Phi:X\rightarrow\mathbb{R}^{X}
\]
via
\[
x\mapsto h_{x}(\cdot):=d(\cdot,x)-d(x_{0},x).
\]
The map $\Phi$ is injective (if $h_x = h_y$, then in particular
$h_{x}(x)=h_{y}(x)$ and $h_{x}(y)=h_{y}(y)$ which together imply 
that $d(x,y)=0$). As the notation indicates, we endow the target 
space $\mathbb{R}^{X}$ with the topology of pointwise convergence. 
The map $\Phi$ is clearly continuous. 
The closure $\overline{\Phi(X)}$ is compact and consists of a subset of 
nonexpansive maps $h: X \to \mathbb{R}$ with $h(x_{0})=0$.
Every element of $\overline{\Phi(X)}$ is called a \emph{metric functional}
on $X$. See \cite{GV12,GK20,Gu19,K19} for discussions on metric functionals
and the related notion of horofunctions. 

Below we will use two cases:
\begin{prop}
	\label{prop:hilbert}\cite{K19,Gu19} Let $H$ be a real Hilbert space
	with scalar product $(\cdot,\cdot)$. Every metric functional on $H$
	has precisely one of the following forms:
	\begin{enumerate}[label=\arabic*.]
	\item $h(x)=\left\Vert x\right\Vert$;
	\item $h(x)=\sqrt{\left\Vert x\right\Vert ^{2}-2(x,rv)+r^{2}}-r$, with $0< r <\infty$ 
			and $v\in H$, $\left\Vert v\right\Vert <1$;
	\item $h(x)=\left\Vert x-rv\right\Vert -r$, with $0< r <\infty$ 
			and $v\in H$, $\left\Vert v\right\Vert =1$;
	\item $h(x)=-(x,v)$, with $v\in H$, $\left\Vert v\right\Vert \leq 1$.
	\end{enumerate}
\end{prop}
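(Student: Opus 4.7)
The plan is to take a metric functional $h$ and exhibit it as the pointwise limit of $h_{x_\alpha}(x)=\|x-x_\alpha\|-\|x_\alpha\|$ along some net $(x_\alpha)\subset H$, then pass to subnets to control the scalars $\|x_\alpha\|$ and to extract a weak limit, and finally split into a bounded and an unbounded regime. The Hilbert space structure enters through two key facts: the polarisation identity, which makes $\|x-x_\alpha\|^{2}$ affine in $x_\alpha$ for fixed $x$, and the Kadec--Klee (Radon--Riesz) property, which separates the ``strongly convergent'' and ``only weakly convergent'' situations in the bounded regime.

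In the bounded regime, first pass to a subnet along which $\|x_\alpha\|\to r\in[0,\infty)$, then to a further subnet along which $x_\alpha\to y$ weakly; this is possible because closed balls in $H$ are weakly compact. By polarisation,
\[
\|x-x_\alpha\|^{2}=\|x\|^{2}-2(x,x_\alpha)+\|x_\alpha\|^{2}\longrightarrow\|x\|^{2}-2(x,y)+r^{2},
\]
so $h(x)=\sqrt{\|x\|^{2}-2(x,y)+r^{2}}-r$. If $r=0$ then $y=0$ and $h(x)=\|x\|$, recovering form~1. If $r>0$ and $\|y\|<r$, set $v=y/r$, so $\|v\|<1$, and read off form~2. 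If $r>0$ and $\|y\|=r$, the Kadec--Klee property upgrades the weak convergence $x_\alpha\to y$ to strong convergence, so $\|x\|^{2}-2(x,y)+r^{2}=\|x-y\|^{2}$, and setting $v=y/r$ yields form~3.

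In the unbounded regime, pass to a subnet with $\|x_\alpha\|\to\infty$, set $u_\alpha=x_\alpha/\|x_\alpha\|$, and extract a further subnet with $u_\alpha\to v$ weakly, so that $\|v\|\le 1$. Using the identity $\sqrt{1+t}-1=t/(\sqrt{1+t}+1)$ with $t_\alpha=-2(x,u_\alpha)/\|x_\alpha\|+\|x\|^{2}/\|x_\alpha\|^{2}$,
\[
\|x-x_\alpha\|-\|x_\alpha\|=\frac{-2(x,u_\alpha)+\|x\|^{2}/\|x_\alpha\|}{\sqrt{1+t_\alpha}+1}\longrightarrow -(x,v),
\]
which is form~4.

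The main obstacle is the Kadec--Klee step that separates forms~2 and~3: only in Hilbert spaces (and, more generally, in spaces with the Radon--Riesz property) does weak convergence together with convergence of norms force strong convergence, so only there can the ``spherical'' form~3 be teased apart from the ``ellipsoidal'' form~2. A little extra book-keeping is needed to check that the four parameter ranges are exhaustive (which follows from the case analysis above) and that each displayed form is actually attained; for instance $x_n=re_n$ along an orthonormal sequence realises form~2 with $v=0$, and $x_n=ne_n$ realises form~4 with $v=0$.
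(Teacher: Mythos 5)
The paper offers no proof of this proposition, simply citing \cite{K19,Gu19}, and your argument --- realising $h$ as a pointwise limit of $h_{x_\alpha}$ along a net, splitting according to whether $\Vert x_\alpha\Vert$ stays bounded, extracting weak subnet limits, and expanding $\Vert x-x_\alpha\Vert^{2}$ by polarisation --- is correct and is essentially the proof given in those references. One remark: the Kadec--Klee step you single out as the main obstacle is in fact superfluous, since once $\Vert y\Vert=r$ the identity $\Vert x\Vert^{2}-2(x,y)+r^{2}=\Vert x-y\Vert^{2}$ follows from polarisation alone with no need to upgrade weak to strong convergence; the separation of forms 2 and 3 is simply the dichotomy $\Vert y\Vert<r$ versus $\Vert y\Vert=r$, where $y$ is the weak limit.
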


Note that only the metric functionals of the form \emph{4.} are unbounded from below (except
$h\equiv 0$). 

The first author determined all the metric functionals on $\ell^{1}(\mathbb{Z})$:
\begin{prop}
	\label{prop:l1}\cite{Gu19} The following functions are precisely
	the metric functionals on $\ell^{1}(\mathbb{Z})$:
	\[
	h(x)=\sum_{s\in I}\epsilon_{s}x_{s}+\sum_{s\in\mathbb{Z}\setminus I}\left|x_{s}-z_{s}\right|-\left|z_{s}\right|
	\]
	where $I\subseteq\mathbb{Z}$, $\epsilon_{s}\in\left\{ -1,+1\right\} $
	for all $s\in I$ and $z_{s}$ are arbitrary real numbers for all
	$s\in\mathbb{Z}\setminus I$. 
\end{prop}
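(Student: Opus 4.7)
The plan is to prove both inclusions. For the \emph{existence} direction---that every function $h$ of the displayed form is a pointwise limit of embedded distance functions $h_{x^{(n)}}$---I would fix an exhaustion $\mathbb{Z} = \bigcup_n S_n$ by finite sets, choose $N_n \uparrow \infty$, and define a finitely supported (hence $\ell^{1}$) sequence by
\[
x^{(n)}_{s} \;=\; \begin{cases} -\epsilon_{s}\, N_n & \text{if } s \in I \cap S_n, \\ z_{s} & \text{if } s \in (\mathbb{Z}\setminus I) \cap S_n, \\ 0 & \text{otherwise.} \end{cases}
\]
For fixed $y \in \ell^{1}(\mathbb{Z})$ and $n$ large enough that $N_n > |y_s|$ for every relevant $s \in I \cap S_n$, a one-line case check shows the $s$-th summand of $h_{x^{(n)}}(y) = \sum_s(|y_s - x^{(n)}_s| - |x^{(n)}_s|)$ equals $\epsilon_s y_s$ on $I \cap S_n$, equals $|y_s - z_s| - |z_s|$ on $(\mathbb{Z}\setminus I) \cap S_n$, and equals $|y_s|$ on $\mathbb{Z}\setminus S_n$. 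Summing over $s$ and sending $n \to \infty$, the first two contributions saturate to the target expressions while $\sum_{s \notin S_n} |y_s| \to 0$ because $y \in \ell^{1}$, yielding $h_{x^{(n)}}(y) \to h(y)$.

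For the converse direction I would start with an arbitrary metric functional $h \in \overline{\Phi(\ell^{1}(\mathbb{Z}))}$. Since $\ell^{1}(\mathbb{Z})$ is separable and $\Phi(\ell^{1}(\mathbb{Z}))$ consists of $1$-Lipschitz maps vanishing at $0$, the compact closure in the pointwise topology is metrizable, so I may write $h = \lim_n h_{x^{(n)}}$ along a \emph{sequence}. Because each coordinate $x^{(n)}_s$ lies in the compact space $[-\infty, +\infty]$ and $\mathbb{Z}$ is countable, a Cantor diagonal extraction (which I do not re-label) delivers $x^{(n)}_s \to z_s \in [-\infty, +\infty]$ for every $s$. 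I then let $I := \{s \in \mathbb{Z} : z_s \in \{-\infty, +\infty\}\}$, set $\epsilon_s := -\mathrm{sign}(z_s) \in \{-1,+1\}$ for $s \in I$, and keep $z_s \in \mathbb{R}$ for $s \notin I$.

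To identify $h$ with the claimed formula I must exchange the outer limit with the sum over $s$. The decisive estimate is the reverse triangle inequality
\[
\bigl|\,|y_s - x^{(n)}_s| - |x^{(n)}_s|\,\bigr| \;\leq\; |y_s|,
\]
which provides a summable majorant independent of $n$. Dominated convergence then yields
\[
h(y) \;=\; \sum_{s \in \mathbb{Z}} \lim_{n \to \infty} \bigl(|y_s - x^{(n)}_s| - |x^{(n)}_s|\bigr),
\]
and the coordinate-wise limits are precisely $\epsilon_s y_s$ when $s \in I$ (the $z_s = \pm\infty$ cases computed in the first paragraph) and $|y_s - z_s| - |z_s|$ when $s \notin I$. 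Hence $h$ has the claimed form.

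The main obstacle is exactly the interchange of limit and sum in the converse direction: pointwise convergence $h_{x^{(n)}} \to h$ only controls the limit of the total $\ell^{1}$-quantity, not of its individual coordinate contributions, so without a uniform dominating function one cannot descend to the coordinate picture. The bound displayed above, combined with $y \in \ell^{1}(\mathbb{Z})$, is precisely the tool that legitimizes this descent. A secondary subtlety is the passage from a general point of $\overline{\Phi(\ell^{1}(\mathbb{Z}))}$ to a sequential limit; this hinges on metrizability of the compact pointwise closure, which in turn uses the separability of $\ell^{1}(\mathbb{Z})$ together with the equicontinuity of $\Phi(\ell^{1}(\mathbb{Z}))$.
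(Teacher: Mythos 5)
This proposition is quoted from \cite{Gu19}; the present paper gives no proof of it, so there is no internal argument to compare against. Your proof is correct: the finite-support approximants with $N_n\to\infty$ give the inclusion of every function of the stated form into $\overline{\Phi(\ell^{1}(\mathbb{Z}))}$, and in the converse direction the coordinate-wise compactness in $[-\infty,+\infty]$ together with the dominating bound $\bigl|\,|y_s-x^{(n)}_s|-|x^{(n)}_s|\,\bigr|\le|y_s|$ legitimately reduces the pointwise limit to coordinate-wise limits via dominated convergence. This is essentially the argument of \cite{Gu19}, whose classification for $\ell^{1}$ rests on the same two ingredients: sequential extraction (using separability, hence metrizability of the compactification) and the $\ell^{1}$ coordinate decomposition with a summable majorant.
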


Another fact we will use below (see \cite[Lemma~3.1]{GK20}) is that
for any metric functional $h$ on a Banach space there is always a
continuous linear functional $f$ of norm at most 1 such that $f\leq h.$
As a first illustration of this tool let us point out the following.
Browder and Bruck introduced \emph{firmly nonexpansive maps }of Banach
spaces. Those are maps $T:V\rightarrow V$ of a Banach space $V$(or
a convex subsets thereof) satisfying 
\[
\left\Vert Tx-Ty\right\Vert \leq\left\Vert (1-t)(Tx-Ty)+t(x-y)\right\Vert 
\]
for every $x,y\in V$ and $t\geq0$. Reich and Shafrir proved in \cite{RS87}
that for such maps it holds that
\[
\lim_{n\rightarrow\infty}\left\Vert T^{n+1}x-T^{n}x\right\Vert =\tau,
\]
where $\tau$ is defined above in (\ref{eq:tau}). This is
a property used in the proof of \cite[Theorem~5.1]{Ry18}. The following
reproves the main theorem in \cite[Section~3]{GV12} in a very special case,
but with the additional information that our metric functional $h$ is a limit point of
the orbit $(T^{n}0)_{n\geq 0}$ in the compact space $\overline{\Phi(V)}$. 
\begin{prop}
	\label{prop:firmly}Let $T$ be a firmly nonexpansive map of a Banach
	space $V$ into itself with $\tau=0$. Then there is a metric functional $h$
	on $V$ which is a limit point of $(T^{n}0)_{n\geq 0}$ in the compact 
	space $\overline{\Phi(V)}$ and such that 
	\[
	h(Tx)\leq h(x)
	\]
	for all $x\in V.$
\end{prop}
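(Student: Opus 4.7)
The plan is to combine the compactness of $\overline{\Phi(V)}$ with the Reich--Shafrir identity, which under the hypothesis $\tau = 0$ simplifies to $\|T^{n+1}0 - T^n 0\| \to 0$. By compactness I first extract a subsequence $(n_k)$ so that $\Phi(T^{n_k} 0)$ converges pointwise to some metric functional $h \in \overline{\Phi(V)}$; this $h$ is then automatically a limit point of the orbit in $\overline{\Phi(V)}$, and the only substantive task is to verify $h(Tx) \leq h(x)$ for every $x \in V$.

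The key preliminary step is to observe that the one-step shift $\Phi(T^{n_k - 1} 0)$ converges pointwise to the same functional $h$. Indeed, for each $x \in V$,
\[
\bigl| h_{T^{n_k} 0}(x) - h_{T^{n_k - 1} 0}(x) \bigr| \leq 2\,\|T^{n_k} 0 - T^{n_k - 1} 0\|,
\]
and the right-hand side tends to $0$ by Reich--Shafrir. With that in hand, nonexpansiveness of $T$ gives
\[
\|Tx - T^{n_k} 0\| = \|Tx - T(T^{n_k - 1} 0)\| \leq \|x - T^{n_k - 1} 0\|.
\]
Subtracting $\|T^{n_k} 0\|$ from both sides, letting $k \to \infty$, and using that $\bigl|\,\|T^{n_k} 0\| - \|T^{n_k - 1} 0\|\,\bigr| \leq \|T^{n_k} 0 - T^{n_k - 1} 0\| \to 0$ to replace $\|T^{n_k} 0\|$ by $\|T^{n_k - 1} 0\|$ on the right, the left-hand side tends to $h(Tx)$ while the right-hand side tends to $h(x)$, yielding the desired inequality.

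The whole argument is brief and essentially forced by the definitions. The only place any real thought is required is the shift observation in the second paragraph; once the Reich--Shafrir bound $\|T^{n+1}0 - T^n 0\| \to 0$ is invoked, everything else is elementary manipulation of the identity $h_y(x) = \|x - y\| - \|y\|$ combined with the nonexpansiveness of $T$.
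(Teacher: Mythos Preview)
Your argument is correct and follows essentially the same route as the paper: extract a limit point $h$ of $(h_{T^{n}0})$ by compactness, apply nonexpansiveness to get $\|Tx-T^{n_k}0\|\le\|x-T^{n_k-1}0\|$, and then use the Reich--Shafrir fact $\|T^{n}0-T^{n-1}0\|\to\tau=0$ to pass to the limit. The only point the paper adds that you do not mention is a parenthetical caveat: $\overline{\Phi(V)}$ is compact but not in general sequentially compact, so for non-separable $V$ the subsequence extraction should be replaced by a net argument (as in \cite[p.~1907]{GK20}).
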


\begin{proof}
	Take a subsequence $n_{i}$ so that $h_{T^{n_{i}}0}\rightarrow h$
	(in the case that $\overline{\Phi(V)}$ is not sequentially compact we can use
	a similar argument as in \cite[p.~1907]{GK20}). Note that for every $x\in V$,
	\begin{align*}
		h(Tx) &=\lim_{i\rightarrow\infty}\left\Vert Tx-T^{n_{i}}0\right\Vert -
					\left\Vert T^{n_{i}}0\right\Vert \\
			  &\leq\liminf_{i\rightarrow\infty}\left\Vert x-T^{n_{i}-1}0\right\Vert -
			  		\left\Vert T^{n_{i}}0\right\Vert \\
			  &\leq\liminf_{i\rightarrow\infty}\left\Vert x-T^{n_{i}}0\right\Vert +
			  		\left\Vert T^{n_{i}}0-T^{n_{i}-1}0\right\Vert -
			  		\left\Vert T^{n_{i}}0\right\Vert \\
			  &=h(x)+\tau.
	\end{align*}
	If $\tau=0$, then the proposition is proved. 
\end{proof}

Note that we do not need $T$ to be defined on the whole vector space,
just as long as we can iterate the map. Firmly nonexpansive maps are
of interest in the study of certain non-linear operators as pioneered
by Browder and they often arise in optimization problems; see, for example,
\cite{R77,BR77,BMR04}.

Ryu constructed an interesting firmly nonexpansive map $T$ 
of $\ell^{2}(\mathbb{N})$ into itself \cite[p.~11]{Ry18} 
such that $T^{n}0/\left\Vert T^{n}0\right\Vert$ converges weakly, but
not strongly, to $0$. We note that the iterates $T^{n}0$ converge to the metric
functional $h=0$. Furthermore we note that Ryu's map also provides
a nonexpansive map of $\ell^{1}(\mathbb{N})$ into itself,
and that in this case the iterates $T^{n}0$ converge to a non-trivial
metric functional (which in fact is linear). Indeed, the
result in \cite[Lemma~4.1]{Ry18} implies that for fixed $i$, 
the $i$th coefficient of the $k$th iterate looks 
like $(T^{k}0)_{i}=\log k+O(1)$ as $k\rightarrow\infty$.
Now if we consider $x\in\ell^{1}$ for which only a finite number
of coefficients are non-zero, say $F$ is the support of $x$, then
\[
\left\Vert x-T^{k}0\right\Vert -\left\Vert T^{k}0\right\Vert =\sum_{i\in F}\left|x_{i}-(T^{k}0)_{i}\right|-\left|(T^{k}0)_{i}\right|
\]
which converges to $-\sum_{i\in F}x_{i}$ as $k\rightarrow\infty$.
Since such finitely supported points are dense in $\ell^{1}$ we have
that the iterates $T^{k}0$ converge to the metric functional, indeed linear,
$h(x)=-\sum_{i\ge0}x_{i}$. 

\section{Invariant subspaces}\label{sec:3}

One of the oldest and best-known open problems in operator theory
is the invariant subspace problem. It asks whether every bounded linear
operator of a complex Hilbert space $H$ of dimension at least two
must have a non-trivial invariant closed linear subspace. For general
Banach spaces the first counterexample was found by Enflo and other
examples by Read \cite{Re85}. We will here make a connection to the
topic of the present paper.

First we point out that in Remark 3.1 of \cite{KoN81} it is stated
that one can choose a linear functional $f$ of norm $1$ such that
$f(T^{n}x-x)\geq n\tau.$ But it seems to us that this can only be
guaranteed when $\tau>0$, otherwise the weak limit of linear functionals
in the proof may be $f\equiv0.$ Compare also with \cite[p.~355]{GV12}.

Given any bounded linear operator $U:H\rightarrow H$, by rescaling
we can assume that its norm is at most one since this does
not alter the invariant subspaces. We will consider associated
affine nonexpansive maps, that is, $Tx=Ux+v$ for some vector $v\in H$.
We will be interested in statements of the form that there exists
a linear functional $f$ such that
\[
f(Tx)\geq f(x)
\]
for all $x\in H.$
\begin{thm}
	\label{prop:invariant}Suppose that $T$ is a nonexpansive map of
	a real Hilbert space into itself of the form $Tx=Ux+v$ for some vector $v$ 
	and linear operator $U$ of norm at most one. If $0\notin\overline{Im(I-T)},$
	then there is a non-zero continuous linear functional $f$ such that
	\[
	f(Tx)\geq f(x)
	\]
	for all $x$. This inequality implies that $\ker(f)$ is a non-trivial
	closed invariant subspace for $U$. 
\end{thm}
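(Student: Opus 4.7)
The plan is to separate $0$ from the closed affine set $\overline{Im(I-T)}$ using Hilbert-space orthogonality, and then deduce $U$-invariance of $\ker(f)$ from the functional inequality alone.

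First I would note that $(I-T)x=(I-U)x-v$, so
\[
\overline{Im(I-T)}=\overline{Im(I-U)}-v,
\]
and the hypothesis $0\notin\overline{Im(I-T)}$ becomes $v\notin \overline{Im(I-U)}$. Using the orthogonal decomposition $H=\overline{Im(I-U)}\oplus \ker(I-U^{*})$ (the second factor being the standard Hilbert-space identification of $\overline{Im(I-U)}^{\perp}$), I would write $v=v_{1}+w$ with $0\neq w\in \ker(I-U^{*})$, and set $f(x):=(x,w)$. This is the required non-zero continuous linear functional.

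The inequality then drops out of a short computation:
\[
f(Tx)-f(x)=(Ux-x,w)+(v,w)=0+\|w\|^{2}>0,
\]
using $w\perp Im(I-U)$ for the first term and $(v,w)=(v_{1}+w,w)=\|w\|^{2}$ for the second.

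For the second assertion, I would argue abstractly from $f(Tx)\geq f(x)$ and the affine form of $T$: given $y\in \ker f$, $x\in H$, $\lambda\in\mathbb{R}$, applying the inequality to $x+\lambda y$ yields
\[
f(Tx)+\lambda f(Uy)\geq f(x),
\]
so $\lambda f(Uy)$ is bounded below by a constant independent of $\lambda$, which forces $f(Uy)=0$. Thus $U(\ker f)\subseteq \ker f$, and $\ker f$ is closed of codimension one, hence non-trivial provided $\dim H\geq 2$. There is no serious technical obstacle; the main conceptual point is recognising that ``$0\notin \overline{Im(I-T)}$'' is exactly the geometric statement ``$v\notin \overline{Im(I-U)}$'', which is then resolved by a single orthogonal projection together with the identification $\overline{Im(I-U)}^{\perp}=\ker(I-U^{*})$.
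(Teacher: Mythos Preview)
Your argument is correct, and in fact more elementary than the paper's. The paper does not separate $0$ from $\overline{Im(I-T)}$ directly; instead it invokes Pazy's result that the hypothesis forces the escape rate $\tau>0$, then applies the Gaubert--Vigeral theorem to obtain a metric functional $h$ with $h(Tx)\leq h(x)-\tau$, and finally uses the classification of metric functionals on a Hilbert space (Proposition~\ref{prop:hilbert}) to conclude that since $h$ is unbounded below it must be linear, $h(x)=-(x,q)$. Your route bypasses all of this machinery: you translate the hypothesis to $v\notin\overline{Im(I-U)}$ and take $w$ to be the orthogonal projection of $v$ onto $\ker(I-U^{*})$, obtaining $f(Tx)-f(x)=\|w\|^{2}>0$ in one line. (Incidentally, since $U^{*}w=w$ you even have $f(Ux)=(Ux,w)=(x,U^{*}w)=f(x)$ for every $x$, so $U$-invariance of $\ker f$ is immediate without the scaling argument --- though the scaling argument is what the paper uses, and it has the virtue of working from the inequality $f(Tx)\geq f(x)$ alone, as the theorem statement advertises.) The paper's approach is chosen because metric functionals are the unifying theme of the note; yours is the natural self-contained Hilbert-space proof.
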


\begin{proof}
	It is known since Pazy's work that the condition imposed implies that
	the corresponding escape rate $\tau$ defined above in (\ref{eq:tau})
	is strictly positive. This also follows from the main theorem in Gaubert-Vigeral
	\cite{GV12} that moreover asserts that there is a metric functional
	such that 
	\[
	h(Tx)\leq h(x)-\tau
	\]
	for every $x$ in the Hilbert space. Since $\tau>0$, it follows
	from the above inequality iterated that $h(T^{n}x)\rightarrow-\infty$
	as $n\rightarrow\infty$. This shows, in view of the identification
	of the metric functionals in a Hilbert space recalled in Proposition
	\ref{prop:hilbert}, that $h$ must be of the type $h(x)=-(x,q)$ for
	some non-zero vector $q$ which must have norm $1$. Hence $f(x):=-h(x)$
	is the linear functional as required.
	
	Notice now that this means that $-(Ux+v,q)\leq-(x,q)$ for every $x$.
	In particular, for any $x\in\ker f$, we have $(Ux,q)+(v,q)\geq 0.$
	Applying the same inequality with $tx$ with scalars $t\in\mathfrak{\mathbb{R}}$
	instead of $x,$ using linearity of the scalar product, it follows
	that the only possibility is that $(Ux,q)=0,$ in other words $Ux\in\ker f$.
	Hence the kernel of $f$, which clearly is a non-trivial and closed
	linear subspace is invariant under $U$. 
\end{proof}

There is a hope, related to cosmic convergence, 
that even when $0\in\overline{Im(I-T)}\setminus Im(I-T),$
there is sometimes such a linear functional $f.$ (The case $0\in Im(I-T)$
means that $v=x-Ux$ for a certain $x$). It is stated 
in \cite[Theorem~5.1]{Ry18} that for a non-zero cosmic weak limit point $q$, 
it holds that $(Tx-x,q)\geq0$ for all $x$ in the Hilbert space, which
is precisely what is needed in view of Theorem \ref{prop:invariant}.
Note however that in the proof of \cite[Theorem~5.1]{Ry18} it is assumed 
that $\left\Vert T^{k_{j}+1}x-T^{k_{j}}x\right\Vert \rightarrow0$,
but for an isometry this norm is constant and positive unless $x$
is a fixed point. As far as we can see the proof of \cite[Theorem~5.1]{Ry18}
needs this assumption.

\begin{rem}
	The condition in Theorem \ref{prop:invariant} means that $1$ is in the 
	compression spectrum of the linear operator $U$. 
	By the definition of this subset of the spectrum, the set $\overline{Im(I-U)}$
	is a non-trivial closed subspace which is clearly invariant under $U$. 
	In contrast, Theorem \ref{prop:invariant} provides a co-dimension $1$ 
	closed invariant subspace for $U$.
\end{rem}

\section{Nonexpansive maps of $\ell^{1}$ }\label{sec:4}

In view of Theorem \ref{prop:invariant} above, it is interesting
to recall that for any nonexpansive map $T$ of a Banach space, Gaubert
and Vigeral in their paper \cite{GV12} (strenghtening \cite{K01}
in the case of star-shaped hemi-metrics) provide a \emph{metric} functional
$h$ so that 
\[
h(Tx)\leq h(x)
\]
for every $x$. For a Hilbert space, in case $\tau=0$ there seems
to be no way in general of guaranteeing that $h$ is not the function
identically $0.$ On the other hand, Gutiérrez \cite{Gu19} identified
explicitly all the metric functionals of the Banach spaces $\ell^{1}$
recalled in Proposition \ref{prop:l1}, and none is identically $0$.
So this gives finer information than \cite{KoN81} in this case. It
is pointed out in \cite[Lemma~3.1]{GK20} that there is always a continuous
linear functional $f$ of norm at most 1 such that $f\leq h.$ Nevertheless,
the discrepancy between linear and metric functionals has as consequence
that the proof of Theorem \ref{prop:invariant} does not lead to an
affirmative solution to the invariant subspace problem in the case
of $\ell^{1}$. Indeed, there is a celebrated counterexample to the
invariant subspace problem for $\ell^{1}$, constructed by Read \cite{Re85}. 

The following can be said about nonexpansive maps of $\ell^{1}$:
\begin{thm}
	Let $T$ be a nonexpansive map of $\ell^{1}(\mathbb{Z})$ into itself. 
	Then there is a non-trivial metric functional $h$ so that 
	\[
	h(Tx)\leq h(x)
	\]
	for every $x\in\ell^{1}(\mathbb{Z})$, and the orbit $(T^{n}0)_{n\geq 0}$
	is contained in a half-space. More precisely, there is a non-trivial
	continuous linear functional $f$ so that 
	\[
	f(T^{n}0)\geq 0
	\]
	for all $n\geq 0$. 
\end{thm}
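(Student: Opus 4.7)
The plan is to combine two ingredients already recalled in this note. First, Gaubert--Vigeral \cite{GV12} supplies a metric functional $h$ on $\ell^{1}(\mathbb{Z})$ with $h(Tx)\leq h(x)$ for every $x$, and iterating at the origin yields $h(T^{n}0)\leq h(0)=0$ for every $n\geq 0$. Second, by Proposition \ref{prop:l1} every metric functional on $\ell^{1}(\mathbb{Z})$ has the explicit form
\[
h(x)=\sum_{s\in I}\epsilon_{s}x_{s}+\sum_{s\notin I}\bigl(|x_{s}-z_{s}|-|z_{s}|\bigr),
\]
and direct inspection shows that none of these is identically zero: in the first two regimes some $\pm e_{s}$ is sent to $\pm 1$, and in the degenerate regime $h=\|\cdot\|_{1}$. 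This settles the first assertion.

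For the linear functional, I would extract $f$ directly from the explicit form rather than appealing to the black-box Lemma~3.1 of \cite{GK20}, because that lemma only produces a continuous linear $f_{0}$ of norm at most one with $f_{0}\leq h$ and does not rule out $f_{0}\equiv 0$. Write $h$ as a separable sum $h(x)=\sum_{s}\psi_{s}(x_{s})$, where $\psi_{s}(t)=\epsilon_{s}t$ for $s\in I$ and $\psi_{s}(t)=|t-z_{s}|-|z_{s}|$ otherwise; each $\psi_{s}$ is convex with $\psi_{s}(0)=0$. Any coordinate-wise selection $c_{s}\in\partial\psi_{s}(0)$ satisfies $|c_{s}|\leq 1$, so $c\in\ell^{\infty}(\mathbb{Z})=\ell^{1}(\mathbb{Z})^{*}$, and summing the one-dimensional subgradient inequalities $c_{s}x_{s}\leq\psi_{s}(x_{s})$ (both sides absolutely summable for $x\in\ell^{1}$) gives $f_{0}(x):=\sum_{s}c_{s}x_{s}\leq h(x)$ for all $x$. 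Setting $f:=-f_{0}$ then yields
\[
f(T^{n}0)=-f_{0}(T^{n}0)\geq -h(T^{n}0)\geq 0
\]
for every $n\geq 0$, placing the orbit in the half-space $\{x:f(x)\geq 0\}$.

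The main obstacle is making the selection $c$ non-zero. A short case analysis in terms of the data $(I,\epsilon_{s},z_{s})$ handles it: if $I\neq\emptyset$ then any $s\in I$ forces $c_{s}=\epsilon_{s}\neq 0$; if $I=\emptyset$ but some $z_{s_{0}}\neq 0$ then $\partial\psi_{s_{0}}(0)=\{-\operatorname{sign}(z_{s_{0}})\}$, again forcing $c_{s_{0}}\neq 0$; and in the remaining case $h=\|\cdot\|_{1}$, so $\partial\psi_{s}(0)=[-1,1]$ for every $s$ and one may simply set $c_{s_{0}}=1$ for an arbitrary $s_{0}$. In every case we obtain a non-trivial $f_{0}$, hence a non-trivial $f$ as required.
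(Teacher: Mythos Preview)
Your proof is correct and follows the same overall architecture as the paper: apply Gaubert--Vigeral to obtain a metric functional $h$ with $h(Tx)\leq h(x)$, use the explicit description of metric functionals on $\ell^{1}(\mathbb{Z})$ to see that $h$ is non-trivial, produce a continuous linear $f_{0}\leq h$, and set $f=-f_{0}$.

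The one genuine difference is how the linear functional is obtained. The paper invokes \cite[Lemma~3.1]{GK20} as a black box and secures non-triviality by observing that every metric functional on $\ell^{1}(\mathbb{Z})$ other than $h=\|\cdot\|_{1}$ takes strictly negative values somewhere, so any $g\leq h$ must as well; the exceptional case $h=\|\cdot\|_{1}$ is handled separately by noting that then $T(0)=0$. Your concern that the lemma ``does not rule out $f_{0}\equiv 0$'' is therefore unwarranted in the paper's argument. That said, your explicit coordinate-wise subgradient construction is a perfectly valid and more self-contained alternative: it bypasses the Hahn--Banach style lemma, makes the resulting $f_{0}\in\ell^{\infty}$ completely concrete, and treats all three cases (including $h=\|\cdot\|_{1}$) uniformly without singling out the fixed-point situation. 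The trade-off is a slightly longer case analysis in place of a one-line citation.
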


\begin{proof}
	The first statement is the Gaubert-Vigeral Theorem \cite[Theorem~1]{GV12}
	with the addition of the explicit determination of such metric functionals
	in \cite{Gu19}, see Proposition \ref{prop:l1} above. Each of these
	metric functionals takes on negative values somewhere, except for
	$h(x)=\left\Vert x\right\Vert $. In this latter exceptional case,
	note that $h(Tx)\leq h(x)$ means that $\left\Vert Tx\right\Vert \leq\left\Vert x\right\Vert $
	which applied to $x=0$ gives $T(0)=0.$ In this case any linear functional
	$f$ would satisfy our claim since $f(T^{n}0)=f(0)=0$. If $h$ does take negative
	values then take the linear functional $g$ obtained in \cite[Lemma~3.1]{GK20}
	with $g\leq h$. This linear functional must be non-trivial since
	it is forced to take on strictly negative values at some points. Iterating
	the main inequality we have 
	\[
	h(T^{n}0)\leq h(T^{n-1}0)\leq...\leq h(0)=0.
	\]
	Let finally $f(x)=-g(x)$, of course also not the identically 0 linear
	functional, and with the property $f(T^{n}0)\geq-h(T^{n}0)\geq0.$ 
	
\end{proof}

Let us remark that Edelstein's example alluded to already above extends
to $\ell^{1}(\mathbb{Z})$. This isometry has unbounded orbits but
they nevertheless return infinitely often to a fixed bounded set.
Let us finish by another example:
\begin{ex}
	\label{exa:cosmic}Consider the nonexpansive map $T:\ell^{1}(\mathbb{N})\rightarrow\ell^{1}(\mathbb{N})$
	defined by $T(x_{1},x_{2},...)=(1,x_{1},x_{2},...)$. This map clearly
	has no fixed points in $\ell^{1}(\mathbb{N})$. Indeed the orbits
	tend to infinity, more precisely, $T^{n}0=(1,1,1,...,1,0,0,...)$
	and $h_{T^{n}0}$ converges to
	\[
	h(x)=\sum_{s=1}^{\infty}\left|x_{s}-1\right|-1
	\]
	as $n\rightarrow\infty$. This $h$ is the metric functional that
	is obtained in \cite{K01,K19} (also the one in \cite{GV12} as can
	be verified), and clearly $h(T^{n}0)\rightarrow-\infty$. On the other
	hand, there are no weak cosmic limit points. 
\end{ex}

\subsection*{Acknowledgment}
The first author acknowledges financial support from the 
Vilho, Yrj\"{o} and Kalle V\"{a}is\"{a}l\"{a} Foundation 
of the Finnish Academy of Science and Letters. 
The second author acknowledges partial financial support 
from the Swiss NSF grant 200020\_15958.

\end{document}